\theoremstyle{plain}
\newtheorem{Thm}{Theorem}
\newtheorem{Cor}{Corollary}
\newcommand*{\R}{\ensuremath{\mathbb{R}}}
\newcommand*{\Z}{\ensuremath{\mathbb{Z}}}
\newcommand*{\C}{\ensuremath{\mathbb{C}}}
\newcommand*{\Ann}{\ensuremath{\mathrm{Ann\,}}}
\newcommand*{\Hom}{\ensuremath{\mathrm{Hom\,}}}
\begin{document}
	
	\date{}
	
\author{{L\'aszl\'o Sz\'ekelyhidi}\\
{\small\it Institute of Mathematics, University of Debrecen,}\\
{\small\rm e-mail: \tt lszekelyhidi@gmail.com}}   
	
	\title{Spectral Synthesis on Varieties
		\footnotetext{The research was supported by the
			Hungarian National Foundation for Scientific Research (OTKA),
			Grant No. K134191}
	\footnotetext{Keywords and phrases:
			spectral analysis, spectral synthesis, locally compact Abelian groups}\footnotetext{AMS (2000)
			Subject Classification:
			43A45, 43A25, 13N15}}

\maketitle

\section{Abstract}
In his classical paper \cite{MR0023948}, L.~Schwartz proved that on the real line, in every linear translation invariant space of continuous complex valued functions, which is closed under compact convergence the exponential monomials span a dense subspace. He studied so-called local ideals in the space of Fourier transforms, and his proof based on the observation that, on the one hand, these local ideals are completely determined by the exponential monomials in the space, and, on the other hand, these local ideals completely determine the space itself. On the other hand, D.~I.~Gurevich in \cite{MR0390759} gave counterexamples for Schwartz's theorem in higher dimension. In this paper we show that the ideas of localization can be extended to general locally compact Abelian groups using abstract derivations on the Fourier algebra of compactly supported measures. Based on this method we present necessary and sufficient conditions for spectral synthesis for varieties on locally compact Abelian groups. Using localization, in \cite{MR4789359} we proved that spectral synthesis holds on a locally compact Abelian group $G$ if and only if it holds on $G/B$, where $B$ is the closed subgroup of compact elements. This leads to a complete characterization of locally compact Abelian groups having spectral synthesis  in \cite{Sze23d}.

\section{Introduction}
If $G$ is a locally compact Abelian group, then its {\it measure algebra} is the space $\mathcal M_c(G)$ of all compactly supported complex Borel measures on $G$. It is well-known that $\mathcal M_c(G)$ can be identified with the topological dual of the space $\mathcal C(G)$, the space of all continuous complex valued functions on $G$ equipped with the pointwise operations and with the topology of uniform convergence on compact sets. The space $\mathcal M_c(G)$ is equipped with the {\it convolution} of measures, defined in the usual way:
$$
\langle \mu*\nu,f\rangle=\int \int f(x+y)\,d\mu(x)\,d\nu(y)
$$
whenever $\mu,\nu$ are in $\mathcal M_c(G)$ and $f$ is in $\mathcal C(G)$. Convolution makes $\mathcal M_c(G)$ a commutative topological agebra with unit, if endowed with the weak*-topology. On the other hand, $\mathcal C(G)$ is a topological vector module over $\mathcal M_c(G)$ with the action
$$
\mu*f(x)=\int f(x-y)\,d\mu(y)
$$
whenever $\mu$ is in $\mathcal M_c(G)$, $f$ is in $\mathcal C(G)$ and $x$ is in $G$. 

The continuous homomorphisms of $G$ into the multiplicative topological group of nonzero complex numbers 
are called {\it exponentials}. 
For each measure $\mu$ in $\mathcal M_c(G)$, the {\it Fourier--Laplace transform} (shortly: {\it Fourier transform}) of the measure $\mu$ is defined on the set of exponentials as
$$
\widehat{\mu}(m)=\int \widecheck{m}\,d\mu.
$$
Here $\widecheck{m}(x)=m(-x)$. The mapping $\mu\mapsto \widehat{\mu}$ is one-to-one, and its image $\mathcal A(G)$ is called the {\it Fourier algebra} of $G$. It is easy to check that $\mathcal A(G)$ is an algebra: the mapping $\mu\mapsto \widehat{\mu}$ is a linear isomorphism, and by the identity
$$
(\mu*\nu)\,\widehat{}\,=\widehat{\mu}\cdot \widehat{\nu},
$$
it is an algebra isomorphism.  As $\mathcal A(G)$ is isomorphic to the measure algebra, it is reasonable to equip it with the topology induced by all Fourier transforms. For the sake of convenience, we shall denote a general element of $\mathcal A(G)$ by $\widehat{\mu}$, where $\mu$ is a measure in $\mathcal M_c(G)$, and similarly, a general subset of $\mathcal A(G)$ will be referred to as $\widehat{H}$, where $H$ is a subset of $\mathcal M_c(G)$.
The ideals of the Fourier algebra are in close connection with the translation invariant subspaces of $\mathcal C(G)$. A closed translation invariant linear subspace of $\mathcal C(G)$ is called a {\it variety}. The smallest nonzero variety is one dimensional: it is spanned by a single  exponential. For each $f$ in $\mathcal C(G)$, the intersection of all varieties including $f$ is called {\it the variety of $f$}, and is denoted by $\tau(f)$. The  annihilator of each variety is a closed ideal in $\mathcal M_c(G)$, and conversely, every closed ideal in $\mathcal M_c(G)$ is the annihilator of some variety in $\mathcal C(G)$. We have the basic identity
$$
\Ann \Ann V=V\enskip\text{and}\enskip \Ann \Ann I=I
$$
for each variety $V$ in $\mathcal C(G)$ and for every closed ideal $I$ in $\mathcal M_c(G)$ (see e.g. \cite{MR3502634}). This variety-ideal connection induces a similar one-to-one connection between varieties on the group and closed ideals in the Fourier algebra. We note that, in general, not every ideal in the measure algebra, resp. in the Fourier algebra is closed (see e.g. \cite{MR2340978}). On the other hand, if $G$ is a discrete group, then every ideal in the measure algebra, resp. in the Fourier algebra, is closed (see \cite{MR3185617}). 

Maximal ideals of the Fourier algebra will play an important role in our discussion. Obviously, the annihilators of one dimensional varieties are closed maximal ideals. They are called {\it exponential maximal ideals}. The annihilator of the one dimensional variety spanned by the exponential $m$ is \hbox{denoted by $M_m$}. Clearly, the corresponding maximal ideal $\widehat{M}_m$ is the set of all Fourier transforms which vanish at $m$. 

Spectral analysis and synthesis studies the structure of varieties. We say that {\it spectral analysis} holds for the variety $V$, if every nonzero subvariety of $V$ contains a nonzero finite dimensional variety. This is equivalent to the property that every nonzero subvariety of $V$ contains an exponential. We say that the variety $V$ is {\it synthesizable}, if all finite dimensional subvarieties in $V$ span a dense subspace. Finally, we say that {\it spectral synthesis} holds for a variety, if all of its subvarieties are synthesizable. If spectral analysis, resp. spectral synthesis holds for each variety, then we say that {\it spectral analysis}, resp. {\it spectral synthesis holds on the group}. 

Synthesizability of a variety means that every function in the variety is the uniform limit on compact sets of functions which belong to some finite dimensional subvariety. Functions in finite dimensional varieties are called {\it exponential polynomials}. The complete description of exponential polynomials can be found in \cite{MR3185617}. We note that continuous homomorphisms of $G$ into the additive group of complex numbers are called {\it additive functions}.

\begin{Thm}\label{expol}
	Let $G$ be a locally compact Abelian group. The continuous function $f:G\to \C$ is an exponential polynomial if and only if it has the form
	$$
	f(x)=\sum_{j=1}^n P_j\big(a_1(x),a_2(x),\dots,a_k(x)\big)m_j(x)
	$$
	where $a_i$ is a real additive function, $m_j:G\to \C$ is an exponential, and \hbox{$P_j:\C^k\to \C$} is a  polynomial $(i=1,2,\dots,k; j=1,2,\dots,n)$.
\end{Thm}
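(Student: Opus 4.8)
\section*{Proof proposal}

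The plan is to prove the two implications separately: the ``if'' part is a routine translation computation, while the ``only if'' part rests on the structure theory of finite dimensional varieties. For sufficiency, suppose $f(x)=\sum_{j=1}^n P_j(a_1(x),\dots,a_k(x))m_j(x)$. Expanding each $P_j$ into monomials, it suffices to note that for a monomial the translate of $g(x)=a_1(x)^{r_1}\cdots a_k(x)^{r_k}\,m(x)$ by an element $y$ expands, by additivity of the $a_i$ and multiplicativity of $m$, into a linear combination of the finitely many functions $a_1^{s_1}\cdots a_k^{s_k}\,m$ with $s_i\le r_i$. Hence the linear span $W$ of all these functions, taken over all monomials occurring in all the $P_j$, is finite dimensional (so automatically closed) and translation invariant, and contains $f$; therefore $\tau(f)\subseteq W$ is finite dimensional and $f$ is an exponential polynomial.

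For necessity, let $V=\tau(f)$ be finite dimensional, so that $I=\Ann V$ is a closed ideal of finite codimension and $R=\mathcal M_c(G)/I$ is a finite dimensional commutative unital $\C$-algebra. Being Artinian, $R$ is a finite product $R\cong\prod_{j=1}^n R_j$ of local rings with nilpotent maximal ideals $\mathfrak m_j$, and each residue field $R_j/\mathfrak m_j$, a finite extension of $\C$, equals $\C$; so the maximal ideals of $R$ pull back to \emph{exponential} maximal ideals $M_{m_1},\dots,M_{m_n}$ by the characterization cited in the introduction. The orthogonal idempotents of $R$ split $V$ as a direct sum $V=V_1\oplus\cdots\oplus V_n$ of finite dimensional varieties, where $V_j$ is annihilated by a power of $M_{m_j}$ and has $m_j$ as its only exponential. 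Since multiplication by $m_j$ is a self-homeomorphism of $\mathcal C(G)$ that carries varieties to varieties, intertwines translations up to scalar factors, and turns $m_j$-twisted difference operators into ordinary ones, it reduces the study of $V_j$ to the case $m_j=1$: then $m_j^{-1}V_j$ is a finite dimensional variety annihilated by $M_1^N$ for some $N$, i.e.\ consisting of continuous generalized polynomials of bounded degree.

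It remains to show that a finite dimensional variety $U$ with $1\in U$ as its only exponential is contained in the algebra generated by finitely many complex additive functions; splitting these into real and imaginary parts then produces the required representation with \emph{real} additive functions, and re-inserting the $m_j$ finishes the argument. Let $b_1,\dots,b_k$ be a basis of the (finite dimensional) subspace of additive functions lying in $U$. I would proceed by induction on the degree: for $g\in U$ of degree $\le N$ decompose $g=\sum_{d\le N}g_d$ into homogeneous parts via the usual multiadditive decomposition of generalized polynomials; the iterated differences $\Delta_{y_1}\cdots\Delta_{y_{N-1}}g$ lie in $U$ and have degree $\le 1$, which forces the symmetric $N$-additive form underlying $g_N$ to take values in $\mathrm{span}(b_1,\dots,b_k)$ in each slot. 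Using that linearly independent additive functions are algebraically independent — so that the monomials of fixed total degree in the $b_i$ are linearly independent — one extracts well-defined multiadditive coefficients and peels the form apart slot by slot, obtaining $g_N\in\C[b_1,\dots,b_k]$; then $g-g_N\in U$ has strictly smaller degree and induction applies.

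The main obstacle is precisely this last step: confining all additive functions that can appear inside $U$ to a single finite dimensional space and upgrading the higher homogeneous components from generalized polynomials to genuine polynomials in the $b_i$. The essential inputs are the nilpotence of $\mathfrak m$ (which bounds the degree), the structure theorem for generalized polynomials, and the algebraic independence of linearly independent additive functions; the bookkeeping in the slot-by-slot extraction is the part most prone to technical slips. This is also where the abstract derivations on the Fourier algebra announced in the abstract should streamline matters, since the point derivations at $1$ are exactly the additive functions and $\dim(\mathfrak m/\mathfrak m^2)$ governs how many of them are needed.
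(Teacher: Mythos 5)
A preliminary remark: the paper does not actually prove Theorem \ref{expol} --- it quotes the result from \cite{MR3185617} --- so there is no in-paper argument to compare with, and your proposal has to stand on its own. Its overall route is the standard one and is essentially sound: the sufficiency computation is fine, and on the necessity side the passage from the finite dimensional variety $V$ to the finite dimensional commutative algebra $\mathcal M_c(G)/\Ann V$, its Artinian decomposition into local factors with residue field $\C$, the identification of the resulting maximal ideals as exponential ones, the splitting $V=V_1\oplus\cdots\oplus V_n$ by orthogonal idempotents, and the untwisting of $V_j$ by $m_j^{-1}$ via \eqref{poly0} are all correct (modulo the harmless $\mu\mapsto\widecheck{\mu}$ convention relating $\Ann V$ to the kernel of the convolution action on $V$).

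The one step that fails as written is the last one: you assert that $g-g_N\in U$, so that the induction can continue inside the fixed variety $U$. This is false in general, because $g_N$ is a polynomial in the $b_i$ but need not belong to $U$. Concretely, on $G=\R^2$ take the additive functions $a(x)=x_1$, $b(x)=x_2$ and $g=a^2+b$; then $U=\tau(g)=\mathrm{span}\{a^2+b,\,a,\,1\}$ is a three dimensional variety annihilated by $M_1^3$, while $g_2=a^2$ and $g-g_2=b$ both lie outside $U$. The repair is routine but must be made: run the induction on the degree over the class of all continuous generalized polynomials lying in \emph{some} finite dimensional variety, rather than over elements of a fixed $U$. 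Indeed $g_N$ lies in the finite dimensional variety $W$ spanned by the monomials $b_1^{\beta_1}\cdots b_k^{\beta_k}$ with $|\beta|\le N$ (your own sufficiency computation shows $W$ is translation invariant), so $g-g_N$ lies in the finite dimensional variety $U+W$ and has degree at most $N-1$, and the inductive hypothesis applies to it there. With this adjustment the extraction step is fine: $\Delta_{y_1}\cdots\Delta_{y_{N-1}}g$ equals $N!\,A_N(y_1,\dots,y_{N-1},\cdot)$ plus a constant, constants lie in $U$ because $1\in U$, so the additive function $A_N(y_1,\dots,y_{N-1},\cdot)$ lies in $\mathrm{span}(b_1,\dots,b_k)$; choosing points $z_1,\dots,z_k$ with $\det\big(b_i(z_j)\big)\ne 0$ lets you peel off the coefficients slot by slot, using the symmetry of $A_N$, and conclude $g_N\in\C[b_1,\dots,b_k]$.
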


The exponential polynomial $f$ above is called an {\it $m_1$-exponential monomial}, or briefly an {\it exponential monomial} if $n=1$, and it is called a {\it polynomial}, if $n=1$ and $m_1= 1$. In other words, a polynomial on $G$ is a polynomial of additive functions, that is, it has the form $x\mapsto P\big(a_1(x),a_2(x),\dots,a_k(x)\big)$.  
Exponential monomials can be characterized by difference operators. For each exponential $m$ and for every $y$ in $G$ we let $\Delta_{m;y}=\delta_{-y}-m(y)\delta_0$, $\delta_g$ being the point mass supported at the point $g$ in $G$. It can be shown that these measures generate the exponential maximal ideal $M_m$ (see \cite{MR3502634}). We use the notation
$$
\Delta_{m;y_1,y_2,\dots,y_n}=\Delta_{m;y_1}*\Delta_{m;y_2}*\cdots *\Delta_{m;y_n}.
$$
In particular,
$$
\Delta_{1;y_1,y_2,\dots,y_n}=\Delta_{y_1,y_2,\dots,y_n}
$$
whenever $y_1,y_2,\dots,y_n$ are in $G$. It is easy to see that
\begin{equation}\label{poly0}
\Delta_{m;y_1,y_2,\dots,y_n}*(fm)=m(y_1+\cdots+y_n)(\Delta_{y_1,y_2,\dots,y_n}*f)\cdot m.
	\end{equation}

\begin{Thm}\label{expmonchar}
	Let $G$ be a locally compact Abelian group and $m$ an exponential. The continuous function $f:G\to\C$ is an $m$-exponential monomial if and only if it is contained in a finite dimensional variety, and there is a natural number $n$ such that
	\begin{equation}\label{expmoneq}
		\Delta_{m;y_1,y_2,\dots,y_{n+1}}*f=0
	\end{equation}
	holds for each $y_1,y_2,\dots,y_{n+1}$ in $G$.
\end{Thm}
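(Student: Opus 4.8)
The plan is to prove the two implications separately; the forward one is a direct computation, while the converse carries the content.

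\emph{Necessity of the conditions.} Suppose $f(x)=P\big(a_1(x),\dots,a_k(x)\big)m(x)$ with real additive functions $a_i$ and a polynomial $P$. Expanding $P$ in shifted arguments and using additivity of the $a_i$, every translate of $f$ is a linear combination --- with coefficients depending on the shift --- of the finitely many functions $a_1^{i_1}\cdots a_k^{i_k}\,m$ with $i_1+\dots+i_k\le\deg P$; hence $\tau(f)$ is contained in their finite dimensional (thus closed) span, so $f$ lies in a finite dimensional variety. For the difference equation, apply \eqref{poly0} with $n+1$ factors and $g=P(a_1,\dots,a_k)$: it turns \eqref{expmoneq} into the assertion $\Delta_{y_1,\dots,y_{n+1}}*P(a_1,\dots,a_k)=0$. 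Since each factor $\Delta_{y}$ lowers the degree of a polynomial of additive functions by at least one (and a constant disappears under a further difference), the choice $n=\deg P$ makes the left side vanish.

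\emph{Sufficiency.} Assume $f$ lies in a finite dimensional variety and that \eqref{expmoneq} holds for some $n$; we may assume $f\neq 0$. By Theorem \ref{expol}, $f$ is an exponential polynomial, and collecting terms with equal exponentials we write $f=\sum_{j=1}^{r}Q_j\cdot m_j$ with the $m_j$ pairwise distinct and each $Q_j$ a nonzero polynomial of additive functions. It suffices to show $r=1$ and $m_1=m$. Put $I=\Ann\tau(f)$; this is a closed ideal, and since $\mu\mapsto\mu*f$ induces a linear isomorphism of $\mathcal M_c(G)/I$ onto the finite dimensional space $\tau(f)$, the quotient $\mathcal M_c(G)/I$ is a finite dimensional commutative unital $\C$-algebra. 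Such an algebra is a finite direct product of local Artinian algebras whose residue fields, being finite extensions of $\C$, equal $\C$; hence the maximal ideals of $\mathcal M_c(G)$ lying over $I$ are finitely many exponential maximal ideals, and, decomposing the finite dimensional variety $\tau(f)$ into its exponential-monomial components and using that $\tau(Qm')$ contains $m'$ for every nonzero $m'$-exponential monomial, these maximal ideals are precisely $M_{m_1},\dots,M_{m_r}$. Now \eqref{expmoneq} says that every product $\Delta_{m;y_1}*\cdots*\Delta_{m;y_{n+1}}$ belongs to $I$; as the measures $\Delta_{m;y}$ generate $M_m$, these products generate $M_m^{\,n+1}$, so $M_m^{\,n+1}\subseteq I\subseteq M_{m_j}$ for every $j$. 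Primeness of $M_{m_j}$ gives $M_m\subseteq M_{m_j}$, maximality gives $M_m=M_{m_j}$, and injectivity of $m\mapsto M_m$ gives $m=m_j$. Since the $m_j$ are distinct, $r=1$ and $m_1=m$, so $f=Q_1m$ is an $m$-exponential monomial.

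\emph{Where the difficulty lies.} The forward implication and the passage through \eqref{poly0} are routine bookkeeping; the real work is in the converse, specifically in the structural fact that the residue algebra $\mathcal M_c(G)/\Ann\tau(f)$ attached to a finite dimensional variety is Artinian with all residue fields equal to $\C$, so that its maximal ideals are exponential and are exactly the $M_{m_j}$ coming from the canonical expansion of $f$. Granting this, the inclusion $M_m^{\,n+1}\subseteq\Ann\tau(f)$ forced by \eqref{expmoneq} isolates the single exponential $m$. A more hands-on route avoids the structure theory: writing $h=f/m$ (legitimate since $m$ is continuous and nowhere zero) one checks from the definition of $\Delta_{m;y}$ that $h$ is an exponential polynomial annihilated by all $\Delta_{y_1,\dots,y_{n+1}}$, and then the independence of exponential monomials with distinct exponentials, together with the observation that a difference operator $\Delta_y$ does not lower the degree of $Q m'$ when $m'(y)\neq 1$, forces every exponential occurring in $h$ to be the constant $1$, i.e. every exponential occurring in $f$ to be $m$.
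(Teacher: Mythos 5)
The paper offers no proof of Theorem~\ref{expmonchar}: it is quoted as a known result, with the description of exponential polynomials referred to \cite{MR3185617}. So your argument has to be judged on its own, and in substance it is correct. The necessity half is routine and fine: the translates of $P(a_1,\dots,a_k)m$ stay in the span of the finitely many monomials $a_1^{i_1}\cdots a_k^{i_k}m$, and \eqref{poly0} converts \eqref{expmoneq} into the Fr\'echet equation for $P(a_1,\dots,a_k)$, which holds with $n=\deg P$ because each $\Delta_y$ drops the degree. The sufficiency half is also sound, but note two things. First, the one genuinely nontrivial ingredient is hidden in the phrase ``decomposing the finite dimensional variety $\tau(f)$ into its exponential-monomial components'': the fact that each component $Q_jm_j$ of the exponential polynomial $f=\sum_jQ_jm_j$ belongs to $\tau(f)$ (hence $m_j\in\tau(f)$ and $I=\Ann\tau(f)\subseteq M_{m_j}$) is a standard but not obvious lemma and should be cited or proved; it is exactly the point where linear independence of exponential monomials with distinct exponentials enters. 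Second, once that inclusion is granted, the Artinian structure theory you invoke is dispensable: taking $y_1=\cdots=y_{n+1}=y$ in \eqref{expmoneq} gives $\Delta_{m;y}^{*(n+1)}\in I\subseteq M_{m_j}$, primeness of the maximal ideal $M_{m_j}$ yields $\Delta_{m;y}\in M_{m_j}$, i.e. $m_j(y)=m(y)$ for every $y$, so $m_j=m$ and $r=1$. The classification of $\mathcal M_c(G)/I$ as a product of local Artinian $\C$-algebras is true and harmless, but it is only needed if one wants to identify \emph{all} maximal ideals over $I$, which the conclusion does not require. (There is also a recurrent $\widecheck{\phantom{f}}$-convention issue --- $\mu*f=0$ versus $\langle\mu,g\rangle=0$ for $g\in\tau(f)$ --- which you should fix once and for all by taking $\Ann V=\{\mu:\mu*g=0\text{ for all }g\in V\}$, consistently with the paper's own usage, e.g.\ in Proposition~\ref{DI}.) Your concluding ``hands-on route,'' dividing by $m$ and showing that a continuous exponential polynomial killed by all $\Delta_{y_1,\dots,y_{n+1}}$ can contain no exponential other than $1$, is essentially the standard proof in the literature and is the cleaner alternative.
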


If we drop the condition that $f$ is contained in a finite dimensional variety, then the solutions of \eqref{expmoneq} are called {\it generalized exponential $m$-monomials}. In particular, if $m=1$, then the solutions of \eqref{expmoneq} are called {\it generalized polynomials}.
The smallest natural number $n$  which  satisfies \eqref{expmoneq} is called the {\it degree} of $f$. 

\section{Differential operators on the Fourier algebra}
Let $G$ be a locally compact abelian group. The set of all exponentials on $G$ is denoted by $\widetilde{G}$: clearly, it is an abelian group with respect to pointwise multiplication of functions. It is obvious that  $\widetilde{G}$ is isomorphic to the direct product $\widehat{G}\times \Hom(G,\R)$, where $\widehat{G}$ is the Pontrjagin dual of $G$, and $\Hom(G,\R)$ denotes the set of all real additive functions on $G$. Indeed, if $m$ is an exponential on $G$, we define 
$$
\chi(x)=\frac{m(x)}{|m(x)|}
\hskip.5cm\text{and}\hskip.5cm
a(x)=\ln |m(x)|
$$ 
for each $x$ in $G$. Clearly, $\chi$ is a character  of $G$, and $a$ is a real additive function, further
$$
m(x)=\chi(x)\cdot \exp a(x)
$$
is a unique representation of $m$.
It follows that the mapping $m\mapsto (\chi,a)$ is a bijective homomorphism between $\widetilde{G}$ and $\widehat{G}\times \Hom(G,\R)$. It is reasonable to equip $\widetilde{G}$ with the compact-open topology. We note that $\widetilde{G}$  is a topological group, but in some cases it is not locally compact. Nevertheless, if $G$ is compactly generated, then $\Hom(G,\R)$ is a finite dimensional vector space. The topological group $\widetilde{G}$ is called the {\it generalized dual} of $G$.
\vskip.2cm

Given a topological group a continuous homomorphisms of the additive group of $\C$ into this group is called a {\it one-parameter group}. We are interested in one-parameter groups in the generalized dual group $\widetilde{G}$. We recall the following result from \cite{MR3481109}:

\begin{Thm}\label{onepar}
	Let $G$ be a locally compact Abelian group. For every additive function $a:G\to \C$ the function $\Gamma_a:\C\to \widetilde{G}$ defined by
	\begin{equation}
		\Gamma_a(z)(x)=\exp z a(x)
	\end{equation}
	for $x$ in $G$ and $z$ in $\C$, is a one-parameter group in $\widetilde G$. Conversely, for every one-parameter group $\Gamma:\C\to \widetilde{G}$ there exists a unique additive function $a:G\to\C$ such that $\Gamma=\Gamma_a$.
\end{Thm}

It is known (see e.g. \cite{MR3481109} and the references there) that a kind of differential calculus can be introduces on generalized dual groups. Here we recall the relevant concepts. Given a function $f:\widetilde{G}\to \C$, an exponential $m$ in $\widetilde{G}$ and a one-parameter group $\Gamma$ in $:\widetilde{G}$ we define  {\it the derivative of $f$ at $m$ along $\Gamma$} as the limit
$$
\partial_{\Gamma}f(m)=\lim_{z\to 0}\, \lim_{w\to 0} \frac{1}{w} [f\bigl(m\cdot \Gamma(z+w)\bigr)-f\bigl(m\cdot \Gamma(z)\bigr)],
$$
if this limit exists. In the light of the previous theorem $\Gamma=\Gamma_a$ for some $a$, hence we can use the simpler notation $\partial_af(m)$, where $a$ is an additive function. If this limit exists at each $m$ in $\widetilde{G}$, then the function $\partial_af: m\mapsto \partial_af(m)$ is defined on $\widetilde{G}$, which can be called the {\it derivative of $f$ along $a$}, or the {\it partial derivative of $f$ with respect to $a$}. Repeating this process for $\partial_af$ instead of $f$ we define higher order partial derivatives assuming that the corresponding limits exist. If $f$ has the property that its partial derivatives exist for any order with respect to any additive function, then $f$ is called a {\it $\mathcal C^{\infty}$-function}. By Theorem 12 in \cite{MR3481109}, every Fourier transform is a $\mathcal C^{\infty}$-function.
\vskip.2cm

We denote higher order derivatives by the symbol  $\partial_{a_1}^{\alpha_1} \partial_{a_2}^{\alpha_2}\dots \partial_{a_n}^{\alpha_n}f$ symbol, where $a_1,a_2,\dots,a_n$ are additive functions, and $\alpha_1,\alpha_2,\dots,\alpha_n$ are nonnegative integers. In addition, if $P$ is a complex polynomial in $n$ variables, then the meaning of $P(\partial_{a_1},\partial_{a_2},\dots,\partial_{a_n})f$ is also clear. We call the operators of the form $P(\partial)=P(\partial_{a_1},\partial_{a_2},\dots,\partial_{a_n})$ {\it differential operators on the Fourier algebra} (see \cite{MR3481109}). We note that the differential operator $P(\partial_{a_1},\partial_{a_2},\dots,\partial_{a_n})$ acts on the function $\widehat{\mu}$ by the following formula:
\begin{equation}\label{poldif}
P(\partial_{a_1},\partial_{a_2},\dots,\partial_{a_n})\widehat{\mu}(m)=\int P(\widecheck{a}_1,\widecheck{a}_2,\dots,\widecheck{a}_n) \widecheck{m}\,d\mu.
\end{equation}
Clearly, all differential operators on $\widetilde{G}$ form a commutative unital algebra, denoted by $\mathcal P(G)$, which is ismorphic to the algebra of all polynomials \hbox{on $G$.}
\vskip.2cm

This formula suggests that we can introduce a more general class of differential operators using generalized polynomials. The formal definition, according to \eqref{poldif}, is as follows:
\begin{equation}\label{genpoldif}
	D\widehat{\mu}(m)=\int \widecheck{p}_D \widecheck{m}\,d\mu,
\end{equation} 
where $p_D$ is a generalized polynomial on $G$, which we call the {\it generating function} of the operator $D$. We shall call such operators {\it generalized differential operators}. Clearly, every differential operator we defined above is a generalized differential operator -- we shall call them {\it polynomial differential operators}. All generalized differential operators form a commutative unital algebra, denoted by $\mathcal D(G)$, which is isomorphic to the algebra of generalized polynomials on $G$.
In order to justify the name "differential operator" we note that all generalized differential operators are derivations on the Fourier algebra in the following sense: the identity operator is considered a derivation of order $0$. The continuous linear operator $D$ on the Fourier algebra is called a {\it derivation of order $1$}, if it satisfies
$$
D(\widehat{\mu}\cdot  \widehat{\nu})=D(\widehat{\mu})\cdot  \widehat{\nu}+\widehat{\mu}\cdot D( \widehat{\nu})
$$
for each $\widehat{\mu}, \widehat{\nu}$ in $\mathcal A(G)$. For every positive integer $n$ the continuous linear operator $D$ on the Fourier algebra is called a {\it derivation of order $n+1$}, if the mapping 
$$
(\widehat{\mu}, \widehat{\nu})\mapsto D(\widehat{\mu}\cdot  \widehat{\nu})-D(\widehat{\mu})\cdot  \widehat{\nu}-\widehat{\mu}\cdot D( \widehat{\nu})
$$
is a derivation of order $n$ in both variables. It is easy to check that the operator $D$ defined in \eqref{genpoldif} is a derivation of order $n$ if and only if its generating function $p_D$ is a generalized polynomial of degree $n$, that is, a solution of the functional equation \eqref{expmoneq} with $m=1$. We shall see that the existence of non-polynomial generalized derivations is closely related to the failure of spectral synthesis.

\section{Localizable ideals}
Given an ideal $\widehat{I}$ in $\mathcal A(G)$ the exponential $m$ is called a {\it root of $\widehat{I}$}, if $\widehat{\mu}(m)=0$ holds for each $\widehat{\mu}$ in $\widehat{I}$. The set of all roots of $\widehat{I}$ is denoted by $Z(\widehat{I})$.
\vskip.2cm

The idea of localization is based on the observation that exponential monomials of the form $x\mapsto P(x)m(x)$ in a variety $V=\Ann I$ can be considered as differential operators $P(-D)$ annihilating $\widehat{I}$ at $m$. Spectral synthesis for the variety means that there are sufficiently many annihilating differential operators at each root such that they characterize the ideal. This idea will be worked out in the subsequent paragraphs.
\vskip.2cm

We say that the generalized differential operator $D_0$ with generating function $p_{D_0}$ {\it annihilates the set $H$ in $\mathcal A$ at $m$}, if for each generalized differential operator $D$ whose generating function $p_D$ belongs to the variety $\tau(p_{D_0})$ of $p_{D_0}$ satisfies
$$
D\widehat{\mu}(m)=0
$$
whenever $\widehat{\mu}$ is in $H$. For instance, the differential operator $D=id$ annihilates an ideal exactly at the roots of the ideal. Given a set $\mathcal D$ of generalized differential operators the set of all $\widehat{\mu}$'s which are annihilated by each $D$ in $\mathcal D$ at the exponential $m$ is denoted by $\widehat{I}_{\mathcal D,m}$. It is easy to check that $\widehat{I}_{\mathcal D,m}$ is a closed ideal in the Fourier algebra. For instance, given an exponential $m$ and $\mathcal D=\{id\}$, then $\widehat{I}_{\mathcal D,m}$ is the maximal ideal $\widehat{M}_m$ consisting of all Fourier transforms which vanish at the exponential $m$. 
\vskip.2cm

A dual concept is defined as follows. Given an ideal $\widehat{I}$ in $\mathcal A(G)$, the set of all generalized differential operators $D$ on $\mathcal A(G)$ annihilating $\widehat{I}$ at the exponential $m$ is denoted by $\mathcal D_{\widehat{I},m}$, and the set of all polynomial differential operators $P(\partial)$  annihilating $\widehat{I}$ at $m$ is denoted by $\mathcal P_{\widehat{I},m}$ We note that $\mathcal D_{\widehat{I},m}=\{0\}$, if and only if $m$ is not a root of the ideal $\widehat{I}$. It is easy to see that $\mathcal D_{\widehat{I},m}$ is a linear space which is translation invariant. In particular, $\mathcal P_{\widehat{I},m}$ is invariant under differentiation, which means that if $P(\partial)$ is in $\mathcal P_{\widehat{I},m}$, then $\partial^{\alpha}P(\partial)$ is in $\mathcal P_{\widehat{I},m}$, as well.
Clearly, for each exponential $m$, we have the obvious relation
$\widehat{I}\subseteq  \widehat{I}_{\mathcal D_{{\widehat{I},m}},m}\subseteq \widehat{I}_{\mathcal P_{{\widehat{I},m}},m},$
which implies 
\begin{equation}\label{neq}
	\widehat{I}\subseteq \bigcap_{m\in Z(\widehat{I})}\widehat{I}_{\mathcal D_{{\widehat{I},m}},m}\subseteq \bigcap_{m\in Z(\widehat{I})}\widehat{I}_{\mathcal P_{{\widehat{I},m}},m}.
\end{equation}
It is clear, that if $m$ is not a root of $\widehat{I}$, then $\mathcal D_{{\widehat{I},m}}=\mathcal P_{{\widehat{I},m}}=\{0\}$, hence  we have $\widehat{I}_{\mathcal D_{{\widehat{I},m}},m}=\widehat{I}_{\mathcal P_{{\widehat{I},m}},m}=\mathcal A(G)$, consequently such $m$'s have no effect on the intersection.
\vskip.2cm
We say that the ideal $\widehat{I}$ is {\it localizable}, if it has following property:  if  $\widehat{\nu}$ satisfies $P(\partial)(\widehat{\nu})(m)=0$ for each exponential $m$ and for every $P(\partial)$ in $\mathcal P_{\widehat{I},m}$, then $\widehat{\nu}$ \hbox{is in $\widehat{I}$.}  In other words, localizability of $\widehat{I}$ means that we have equality in equation \eqref{neq}. Roughly speaking, localizable ideals are uniquely determined by their roots together with their "multiplicity". For instance, exponential maximal ideals are localizable. Indeed, if $\widehat{M}_m$ is non-localizable, then there exists a $\widehat{\nu}$ not in $\widehat{M}_m$ such that  
$$
P(\partial)(\widehat{\nu})(m)=0
$$
for each polynomial differential operator $P(\partial)$ in $\mathcal P_{\widehat{M}_m,m}$. As the identity operator $id$ is in  $\mathcal P_{\widehat{M}_m,m}$, we immediately have \hbox{$\widehat{\nu}(m)=0$,} consequently $\widehat{\nu}$ is in $\widehat{M}_m$, a contradiction.
\vskip.2cm

Another simple but important observation is that if there exists a non-polynomial differential operator on the Fourier algebra which annihilates $\widehat{I}$ at some exponential, that is, if there exists a generalized polynomial in $\Ann I$, which is not a polynomial, then, by \eqref{neq}, the ideal $\widehat{I}$ may not be localizable, because the two intersections in \eqref{neq} may be different.

\section{Spectral synthesis and localizability}

\begin{Thm}
	Let $\mathcal D$ be a family of generalized differential operators on $\mathcal A(G)$.  The closed ideal $\widehat{I}$ in $\mathcal A(G)$ has the property
	\begin{equation}\label{idinc}
		\widehat{I}= \bigcap_{m\in Z(\widehat{I})} \widehat{I}_{\mathcal D,m}
	\end{equation}
if and only if the functions $\widecheck{f}_{D,m}m$ with $D$ in $\mathcal D$ and $m$ in $Z(\widehat{I})$, span a dense subspace in $\Ann I$.
\end{Thm}

\begin{proof}
Let $\widehat{J}=\bigcap_{m\in Z(\widehat{I})} \widehat{I}_{\mathcal D,m}$, and assume that $\widehat{J}\subseteq \widehat{I}$. If the subspace spanned by all functions of the form $\widecheck{f}_{D,m}m$ with $D$ in $\mathcal D$ and $m$ in $Z(\widehat{I})$ is not dense in $\Ann I$, then there exists a $\mu_0$ not in $\Ann \Ann I=I$ such that $\mu_0$  annihilates all functions of the form $\widecheck{f}_{D,m}m$ with $D$ in $\mathcal D$ and $m$  in $Z(\widehat{I})$. In other words, for each $x$ in $G$ we have
$$
0=(\mu_0*\widecheck{f}_{D,m})m(x)=\int \widecheck{f}_{D,m}(x-y)m(x-y)\,d\mu_0(y)=
$$
$$
\int f_{D,m}(y-x)\widecheck{m}(y-x)\,d\mu_0(y)
=m(x)\int f_{D,m}(y-x)\widecheck{m}(y)\,d\mu_0(y).
$$
In particular, for $x=0$
$$
0=\mu_0*\widecheck{f}_{D,m}m(0)=\int f_{D,m}(y)\widecheck{m}(y)\,d\mu_0(y)=D(\mu_0)(m)
$$
holds for each $D$ in $\mathcal D$ and for every root $m$ of $\widehat{I}$.  In other words, $\widehat{\mu}_0$ is in $\widehat{I}_{\mathcal{D},m}$ for each root $m$ of $\widehat{I}$, hence it is in the set $\widehat{J}$, but not in $\widehat{I}$ -- a contradiction.
\vskip.2cm

Conversely, assume that the subspace spanned by all functions of the form $\widecheck{f}_{D,m}m$ with $D$ in $\mathcal D$ and $m$ in $Z(\widehat{I})$, is dense in $\Ann I$.  It follows that any $\mu$ in $\mathcal M_c(G)$, which satisfies
\begin{equation}\label{dercond0}
	\int \widecheck{f}_{D,m}(x-y)m(x-y)\,d\mu(y)=0
\end{equation}
for all $D$ in $\mathcal D$, $m$ in $Z(\widehat{I})$ and $x$ in $G$, belongs to $I=\Ann \Ann I$. Now let $\widehat{\mu}$ be in $\widehat{I}_{\mathcal D,m}$, and suppose that $D$ is in $\mathcal D$. Then for each $x$ in $G$ and for every root $m$ of $\widehat{I}$, the function $\widehat{\mu}\cdot \widehat{\delta}_{-x}$ is in $\widehat{I}_{\mathcal D,m}$, hence 
$$
0=D(\widehat{\mu}\cdot \widehat{\delta}_{-x})(m)=\int \widecheck{f}_{D,m}(x-y)m(x-y)\,d\mu(y),
$$
that is, $\widehat{\mu}$ satisfies \eqref{dercond0} for each $D$ in $\mathcal D$ and $m$ in $Z(\widehat{I})$. This implies that $\mu$ is in $I$, and the theorem is proved.
\end{proof}

\begin{Cor}\label{derid5}
	Let $\widehat{I}$ be an ideal in $\mathcal A(G)$. Then $\widehat{I}=\bigcap_{m} \widehat{I}_{\mathcal P_{\widehat{I},m},m}$ holds if and only if all  functions of the form $\widecheck{f}_{D,m}m$ with $m$ in $Z(\widehat{I})$ and $D$ in $\mathcal P_{\widehat{I},m}$ span a dense subspace in the variety $\Ann I$.
\end{Cor}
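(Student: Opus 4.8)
The statement is, up to one routine reduction, the special case of the preceding Proposition in which, for each root $m\in Z(\widehat I)$, one takes the family of derivations to be $\mathcal P_{\widehat I,m}$. The reduction is this: by \eqref{neq} the inclusion $\widehat I\subseteq\bigcap_{m\in Z(\widehat I)}\widehat I_{\mathcal P_{\widehat I,m},m}$ holds unconditionally, so the equality $\widehat I=\bigcap_{m\in Z(\widehat I)}\widehat I_{\mathcal P_{\widehat I,m},m}$ is equivalent to the reverse inclusion, which is exactly property \eqref{idinc} for the family $\mathcal P_{\widehat I,m}$. The preceding Proposition then rephrases that inclusion as the density of the functions $\widecheck f_{D,m}m$, with $m\in Z(\widehat I)$ and $D\in\mathcal P_{\widehat I,m}$, in $\Ann I$ — the assertion of the corollary. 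The fact that here the family depends on the root $m$, whereas the Proposition is stated for one fixed family, is harmless: in both directions of the Proposition's proof the roots of $\widehat I$ are treated one at a time, so the family attached to $m$ never interacts with that attached to another root.

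What genuinely needs to be checked is that the Proposition is applicable to this choice of family. Its proof moves from a measure $\mu_0$ annihilating the functions $\widecheck f_{D,m}m$ to the conclusion that $\widehat\mu_0$ lies in each ideal $\widehat I_{D,m}$; by Proposition \ref{DI} that ideal equals $\Ann\tau(\widecheck f_{D,m}m)$, so one actually needs $\mu_0$ to annihilate the whole variety generated by $\widecheck f_{D,m}m$, not merely that function. This step is legitimate provided the closed linear span of $\{\widecheck f_{D,m}m:D\in\mathcal P_{\widehat I,m}\}$ is translation invariant, and verifying that for this particular family is, I expect, the main point of the proof.

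So the plan comes down to the following lemma: for a fixed $m\in Z(\widehat I)$ and $D\in\mathcal P_{\widehat I,m}$, every translate of $\widecheck f_{D,m}m$ is a scalar multiple of $\widecheck f_{D',m}m$ for a suitable $D'\in\mathcal P_{\widehat I,m}$. The candidate for $D'$ is the polynomial derivation whose generating function is an appropriate shift $u\mapsto f_{D,m}(u+w)$ of the polynomial $f_{D,m}$; this is again a polynomial of the same additive functions, so $D'\in\mathcal P(G)$ by Theorems \ref{dergen} and \ref{deralg} (here one uses that polynomial derivations have polynomial generating functions that remain polynomials under such shifts). That $D'$ still annihilates $\widehat I$ at $m$ follows from an identity of the shape $D'(\widehat\nu)(m)=c\,D(\widehat\nu\cdot\widehat\delta_w)(m)$ with $c\in\C$, $c\neq 0$, whose right-hand side vanishes on $\widehat I$ since $\widehat\nu\cdot\widehat\delta_w\in\widehat I$ whenever $\widehat\nu\in\widehat I$. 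Granting the lemma, the span of $\{\widecheck f_{D,m}m:D\in\mathcal P_{\widehat I,m}\}$ is translation invariant, equal to $\sum_{D\in\mathcal P_{\widehat I,m}}\tau(\widecheck f_{D,m}m)$, so annihilating the functions is the same as annihilating these varieties; the preceding Proposition applies, and the corollary follows. Apart from this lemma, everything is formal manipulation of the preceding Proposition and of the chain \eqref{neq}.
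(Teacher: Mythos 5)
Your argument is correct and follows the same route the paper intends: the corollary is the preceding Proposition applied with $\mathcal D=\mathcal P_{\widehat I,m}$, combined with the unconditional inclusion \eqref{neq}, and the paper indeed offers no separate proof. The point you single out as "what genuinely needs to be checked" is well spotted: the Proposition's proof passes from "$\mu_0$ annihilates the functions $\widecheck f_{D,m}m$" to "$\mu_0*\widecheck f_{D,m}m(x)=0$ for every $x$", and then to "$\widehat\mu_0\in\widehat I_{\mathcal D,m}$", which by Proposition \ref{DI} requires $\mu_0$ to annihilate the whole variety $\tau(\widecheck f_{D,m}m)$; this is legitimate only when the closed span of the chosen functions is translation invariant, and the paper does not comment on this. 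Your lemma closes the gap correctly for the family $\mathcal P_{\widehat I,m}$: the translate of $\widecheck f_{D,m}m$ by $w$ is $m(w)\,\widecheck{f'}m$ with $f'(u)=f_{D,m}(u-w)$ again a polynomial of the same additive functions, and $D'\in\mathcal P_{\widehat I,m}$ because $D'(\widehat\nu)(m)$ is a nonzero multiple of $D(\widehat\nu\cdot\widehat\delta_{-w})(m)$, which vanishes on the ideal $\widehat I$. Two small remarks that streamline this: first, since $D\in\mathcal P_{\widehat I,m}$ already gives $I\subseteq I_{D,m}=\Ann\tau(\widecheck f_{D,m}m)$, one has $\tau(\widecheck f_{D,m}m)\subseteq\Ann I$ directly, so all translates automatically stay inside $\Ann I$; second, the correspondence invoked later in the proof of Theorem \ref{syn} identifies $\{\widecheck f_{D,m}m: D\in\mathcal P_{\widehat I,m}\}$ with the set of all $m$-exponential monomials in $\Ann I$, a set that is visibly translation invariant, which gives your lemma at once.
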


This corollary clearly implies our main result: 

\begin{Cor}\label{syn}
	Let  $\widehat{I}$ be an ideal in $\mathcal A(G)$. Then $\Ann I$ is  synthesizable if and only if $\widehat{I}$ is localizable.
\end{Cor}

\begin{Cor}\label{nonloc}
	If $D$ is a non-polynomial derivation and $m$ is an exponential, then $\widehat{I}_{D,m}$ is non-localizable.
\end{Cor}

\begin{proof}
	Suppose that $D$ is of order $n$, and let $f_{D,m}$ be the generating function of $D$. As $D$ is non-polynomial, hence $n\geq 2$. Then $\widecheck{f}_{D,m}m$ is a generalized exponential monomial of degree $n$, and it cannot be annihilated by any polynomial derivation of degree at most $n-1$. It follows that $\mathcal P_{\widehat{I}_{D,m},m}$ consists of polynomial derivations of order at most $n-1$. Consequently, all  functions of the form $\widecheck{f}_{D,m}m$ with $m$ in $Z(\widehat{I})$ and $D$ in $\mathcal P_{\widehat{I},m}$ do not span a dense subspace in the variety $\Ann I_{D,m}$. We infer that $\Ann I_{D,m}$ is not synthesizable, and, by the previous theorem, $\widehat{I}_{D,m}$ is non-localizable.
\end{proof}

We note that, by the structure theory of locally compact Abelian groups, every derivation on the Fourier algebra of compactly generated  locally compact Abelian groups is polynomial. On the other hand, in the non-compactly generated case there always exist non-polynomial derivations on the Fourier algebra, even in the discrete case (see \cite{MR2039084,MR2340978}). 
\vskip.2cm

An application of the concept of localizability, we can give a simple proof for the following theorem (see \cite{MR2340978, MR3703555}):

\begin{Cor}\label{tors}
	Spectral synthesis holds on a discrete Abelian group if and only if its torsion free rank is finite.
\end{Cor}

\begin{proof}
	If the torsion free rank of $G$ is infinite, then there is a generalized polynomial on $G$, which is not a polynomial (see \cite{MR2039084}), hence there is a non-polynomial derivation on the Fourier algebra. By Corollary \ref{nonloc}, there exists a non-localizable ideal $\widehat{I}$ in the Fourier algebra of $G$, hence the variety $\Ann I$ is not synthesizable.
	\vskip.2cm
	
	Conversely, let $G$ have finite torsion free rank. The subgroup $B$ of compact elements of $G$ coincides with the set $T$ of all elements of finite order, and $G/T$ is a (continuous) homomorphic image of $\Z^n$ with some nonnegative integer $n$. As spectral synthesis holds on $\Z^n$ (see \cite{MR0098951}), it holds on its homomorphic images, by the results in \cite{MR3589184}. Finally, by the results in \cite{MR4789359}, if spectral synthesis holds on $G/T$, then it holds on $G$, as well.
\end{proof}

\section{Statements and Declarations}
Data sharing not applicable to this article as no datasets were generated or analysed during the current study. There are no financial or non-financial interests that are directly or indirectly related to the work submitted for publication.

\end{document}